\newcommand{\until}[1]{\{1,\dots, #1\}}
\newcommand{\subscr}[2]{#1_{\textup{#2}}}
\newcommand{\setdef}[2]{\{#1 \, : \; #2\}}
\newcommand{\map}[3]{#1: #2 \rightarrow #3}
\newcommand{\union}{\cup}
\newcommand{\intersection}{\cap}
\newcommand{\eps}{\varepsilon}
\newcommand{\real}{\mathbb{R}}
\newcommand{\realpositive}{\mathbb{R}_{>0}}
\newcommand{\realnonnegative}{\mathbb{R}_{\geq0}}
\newcommand{\integer}{\mathbb{Z}}
\newcommand{\Union}{\bigcup}
\newcommand{\Intersection}{\bigcap}
\newcommand{\bigsetdef}[2]{\left\{#1 \, : \; #2\right\}}
\newcommand{\ds}{\displaystyle}
\theoremstyle{definition}
\newtheorem{example}{Example}
\newtheorem{theorem}{Theorem}
\newtheorem{corollary}[theorem]{Corollary}
\newtheorem{proposition}[theorem]{Proposition}
\newcommand{\be}{\begin{equation}}
\newcommand{\ee}{\end{equation}}
\newcommand{\co}{\overline{\textup{co}}} 
\newcommand{\G}{\mathcal{G}} 
\newcommand{\E}{\mathcal{E}} 
\newcommand{\xave}{\subscr{x}{ave}} 
\newcommand{\Tcon}{\subscr{T}{con}} 
\newcommand{\Tempty}{\subscr{T}{empty}} 
\newcommand{\K}{\mathcal{K}}  
\newcommand{\qset}{\mathcal{S}} 
\newcommand{\aLow}{a_{\min}} 
\newcommand{\aUp}{a_{\max}} 
\newcommand{\borderI}{I^{\partial}} 
\newcommand{\intI}{\mathring{I}}
\newcommand{\plusI}{I^+}
\title{Continuous-time quantized consensus: convergence of Krasovskii solutions}
\author{Paolo Frasca\thanks{P. Frasca is with the Dipartimento di Matematica, Politecnico di Torino, Torino, Italy. \texttt{paolo.frasca@polito.it}. The work of the author was partly supported by MIUR under grant PRIN-20087W5P2K.
The author wishes to thank F. Ceragioli and two anonymous reviewers for their remarks.
}
}
\begin{document}
\maketitle

\begin{abstract} 
This note studies a network of agents having continuous-time dynamics with quantized interactions and time-varying directed topology. Due to the discontinuity of the dynamics, solutions of the resulting ODE system are intended in the sense of Krasovskii. 
A limit connectivity graph is defined, which encodes persistent interactions between nodes: if such graph has a globally reachable node, Krasovskii solutions reach consensus (up to the quantizer precision) after a finite time.
Under the additional assumption of a time-invariant topology, the convergence time is upper bounded by a quantity which depends on the network size and the quantizer precision. It is observed that the convergence time can be very large for solutions which stay on a discontinuity surface.
\end{abstract}


\section{Introduction}


Problems of consensus and coordination in networks have been widely studied during the last decade using a blend of tools from control theory and graph theory.
While linear consensus systems based on time-invariant networks are easy to understand, things become harder when the network topology depends on time, or when communication between nodes is affected by limited precision due to bandwidth constraints.
Consensus problems have been studied on time-dependent networks by a vast literature: we refer the reader to the early works~\cite{ROS-RMM:03c,LM:04}, as well as to the books~\cite{FB-JC-SM:09,MM-ME:10} for an introduction and to~\cite{JMH-JNT:11a} for recent related results. 
On the other hand, coordination and consensus have also been studied in systems subject to limited-precision effects, {\it i.e.}, to quantization. Most authors have focused on a variety of problems for discrete-time systems, including the analysis of convergence assuming static quantizers~\cite{AK-TB-RS:07}\cite{TCA-MJC-MGR:08}\cite{PF-RC-FF-SZ:08}\cite{RC-FF-PF-SZ:08}\cite{SK-JM:10}
and the design of effective dynamic quantization schemes~\cite{RC-FB-SZ:10}\cite{TL-MF-LX-JFZ:09}.
Quantized continuous-time systems, instead, have attracted attention more recently. Controllers based on quantizing the differences between the states of connected nodes are studied in~\cite{DVD-KHJ:10} under the assumption that the network topology is a tree, and in~\cite{GC-FLL-LX:11} using binary quantizers in a leader-following framework.
Quantized communication of states is instead considered in~\cite{FC-CDP-PF:10a} for static topologies and in~\cite{CDP:11} for dissipative systems. 

In the analysis of continuous-time quantized dynamics, the inherent discontinuity of the system right-hand side entails some mathematical difficulties, which are discussed in~\cite{JC:06b} and~\cite{FC-CDP-PF:10a}. The latter paper considers a simple continuous-time average consensus dynamics with time-invariant topology and uniform static quantizers, and  demonstrates that choosing a suitable definition of solution is essential to ensure that solutions are defined for all times and thus to permit a meaningful convergence analysis.
A natural and effective choice are Krasovskii solutions, which indeed are complete for every initial condition and converge to approximate consensus conditions under mild assumptions.

\subsection*{Statement of contributions}
After this literature review, we are able to present the contribution of this paper.
We study a coordination task for a network of agents having a scalar continuous-time dynamics, assuming that 
\renewcommand{\theenumi}{\roman{enumi}}
\renewcommand{\labelenumi}{(\theenumi)}
\begin{enumerate}
\item the interaction between the agents is weighted by time-dependent coefficients which represent a dynamical communication network; and 
\item connected agents can exchange information about their states only through a (static) quantizer.
\end{enumerate} Due to the quantization constraint, the goal of consensus between states can only be approximated up to the quantizer precision. 
Our main contribution consists in a sufficient condition for finite-time convergence of Krasovskii solutions to the best achievable approximation. This condition, presented in Theorem~\ref{th:general-conv}, is based on the connectivity of a suitable limit graph.
Compared with the referenced literature, our convergence result holds (i) under milder assumptions on the network connectivity; and (ii) for a larger class of quantizers.
Additionally, in Section~\ref{sec:uniform} the convergence result is specialized to uniform quantizers and to average-preserving dynamics. With the further assumption of time-invariant topology, we also derive an upper bound on the convergence time, which is inversely proportional to the quantizer precision and is exponentially increasing with the network size. The tightness of this bound is discussed in view of {\it ad hoc} examples and of the evidences in the literature.
We leave outside the scope of this paper the analysis of controllers based on quantization of differences, as well as the design of optimal controllers and quantizers, either dynamic or static.

\section{Mathematical tools: Graphs and ODEs}
In this section we provide some background in differential equations and graph theory.
For our analysis it is necessary to recall from~\cite{OH:79} a certain notion of solution to a --possibly discontinuous-- differential equation, which is based on defining a suitable differential inclusion.
Given\footnote{The symbols $\integer$, $\real$, $\realnonnegative$ $\realpositive$ denote the sets of integer, real, nonnegative and positive numbers, respectively. $\real^n$ denotes an $n$-dimensional Euclidean space. Writing $\real^A$, where $A$ is a set of cardinality $n$, we are indexing the components in the set $A$. Given $r\in \real$, the set of the (integer) multiples of $r$ is denoted by $r \integer$.} $\map{f}{\realpositive\times \real^N}{\real^N}$ and the differential equation $\dot x=f(t,x)$, we say that $\map{x}{J}{\real^N}$ solves this differential equation in the Krasovskii sense if $x(\cdot)$ is absolutely continuous and for almost every time $t$ in the interval $J\subset \realpositive$ satisfies the differential inclusion
$\dot x(t)\in \K f(t,x(t))$, 
where $$\K f(t,x)=\Intersection_{\delta>0} \co f(t,B(x,\delta)),$$
with $\co$ denoting the convex closure and $B(y,r)$ the Euclidean ball of radius $r$ centered in $y$. Here and elsewhere in the paper, ``almost every'' means ``except in a set of zero Lebesque measure''. A solution is said to be complete if $J=(0,+\infty).$
Note that we will also apply the Krasovskii operator $\K$ to autonomous functions $f(x)$. An example of the convexification induced by $\K$ is provided later in Figure~\ref{fig:Kq-map}.
A similar notion is that of Filippov solution~\cite[Definition~6]{AB-FC:99},
which is quite common in the literature but will not be used here. Indeed, every Filippov solution is also a Krasovskii solution, so that the results in this paper apply {\it a fortiori} to Filippov solutions.

Our analysis also involves graphs and weighted graphs. We introduce here the main notions which we shall use later: the reader is referred to the literature, for instance to~\cite{RC-FF-AS-SZ:08} or to the book~\cite{MM-ME:10}, for a more complete introduction. Given a finite set of vertices (or nodes)~$V$, a (directed) graph $G$ is a pair $(V,E)$ where $E\subset V\times V$ is the set of edges (or arcs). A weighted graph is triple $(V,E,A)$ which includes a weighted adjacency matrix $A\in\realnonnegative^{V\times V}$ with the consistency condition that $A_{uv}>0$ if and only if $(u,v)\in E.$ We also assume that $A_{uu}=0$ for all $u\in V.$
The Laplacian matrix associated to $A$ is a matrix $L\in\real^{V\times V}$ such that $L_{uv}=-A_{uv}$ if $u\neq v$ and $L_{uu}=\sum_{v\in V}A_{uv}$.
A sink is a node $u$ with no outgoing edge --that is, such that $E$ does not contain any edge of the form $(u,v).$
A path (of length $l$) from $u$ to $v$ in $G$ is an ordered list of edges $(e_1, \ldots, e_l)$ in the form $((u, w_1),(w_1,w_2), (w_2,w_3), \ldots, (w_{l-1},v)).$
If such a path exists, we say that $v$ can be reached from $u$.
A cycle is a path from a node to itself. A graph is said to be connected if for every pair of nodes $(u,v)$, either $v$ can be reached from $u$ or $u$ can be reached from $v$. Instead, a graph is said to be strongly connected if every two nodes can be reached from each other.
Given any directed graph $G=(V,E)$ we can consider its strongly connected components, namely maximal strongly connected subgraphs $G_k$, $k\in\until{s}$ with set of vertices $V_k \subset V$ and set of arcs $E_k = E \intersection (V_k \times V_k)$ such that the sets $V_k$ form a partition of $V$. These components may have connections among each other: in order to encode these connections we define a directed graph $\mathcal T(G)$ with set of vertices $\until{s}$ such that there is an arc from $h$ to $k$ if there is an arc in $G$ from a vertex in $V_h$ to a vertex in $V_k$. We observe that (i) $\mathcal T(G)$ has no cycle; (ii) $\mathcal T(G)$ is connected and has one sink if and only if there exists in $G$ a globally reachable node, {\it i.e.}, a node which can be reached from every other node.

\section{Problem statement and main result}
In this section we introduce the dynamics of interest, and we state and prove our main convergence result.
Let there be 
$N$ agents, indexed in a set $I$, and for any pair $(i,j)\in I\times I$, let $\map{a_{ij}(\cdot)}{\realnonnegative}{{0}\union [\aLow,\aUp]}$ be a measurable function, with $0<\aLow\le\aUp$.
These interaction functions naturally lead to the following definitions.
For every time $t$, we consider a weighted interaction graph $\G(t)=(I, \E(t), A(t))$, such that the $i,j$-th component of the matrix $A(t)$ is the value $a_{ij}(t),$ and $(i,j)\in \E(t)$ if and only if $a_{ij}(t)>0.$
Given the function $\G(t)$, we define --following~\cite{JMH-JNT:11a}-- an {\em unbounded interactions graph} $\G_\infty=(I, \E_\infty)$ 
by $$\E_\infty=\setdef{(i,j)\in I\times I}{\lim_{t\to+\infty}\int_{t_0}^{t}a_{ij}(s)ds=+\infty\quad \forall\, t_0\ge0}.$$
We observe that $\G_\infty$ is the graph whose edges connect the nodes which are connected in $\G(t)$ for an infinite duration of time.

For $i\in I$, we let $x_i(t)$ be a real variable and consider the dynamics
\be\label{eq:quantized-dynamics}
\dot x_i=\sum_{j\in I}{a_{ij}(t) (q(x_j)-q(x_i))}
\ee
where $\map{q}{\real}{\qset}$ is a {\em quantizer} mapping real numbers into a discrete\footnote{A subset $\qset\subset \real$ is said to be discrete if all its points are isolated. Examples include the set of the integers and every finite subset of $\real$. Note that if $\qset$ has no limit point in $\real$, then $\qset$ is discrete.} set.
System~\eqref{eq:quantized-dynamics} can also be rewritten in vector form as $$\dot x= -L(t) q(x),$$ where $x(t)\in \real^I$ is the state vector, $L(t)$ is the Laplacian matrix associated to the weighted adjacency matrix $A(t)$ and by a slight notational abuse, $q$ is defined to operate componentwise on vectors.
We consider for~\eqref{eq:quantized-dynamics} solutions in the sense of Krasovskii, which we have defined in the previous section, and thanks to the linearity of the Krasovskii operator $\K$, we have that a Krasovskii solution to~\eqref{eq:quantized-dynamics} is an absolutely continuous function of time which satisfies for almost every time the differential inclusion
$$\dot x\in - L(t) \K q(x).$$
By the current assumptions of boundedness on the functions $a_{ij}$, for any $\bar x\in \real^I$ there exists a complete Krasovskii solution $x(t)$ to~\eqref{eq:quantized-dynamics}, such that $x(0)=\bar x.$ Note, however, that there can be more than one of such solutions. In the rest of this paper, whenever we refer to a solution, we mean a complete solution.

\bigskip
After these preliminary observations, we are ready to state and prove that system~\eqref{eq:quantized-dynamics} reaches quantized consensus equilibria in finite time, provided the unbounded interactions graph has a globally reachable node.

\begin{theorem}[Finite-time quantized consensus]\label{th:general-conv}
Let $\qset$ be a subset of $\real$ with no limit point and $\map{q}{\real}{\qset}$ be a non-decreasing function. Let $x(t)$ be a Krasovskii solution to~\eqref{eq:quantized-dynamics}.
If $\mathcal T(\G_\infty)$ is connected and has only one sink, then there exist $\Tcon\ge0$ and $s^*\in \qset$ such that, for every $t\ge \Tcon$, $$s^*\in \K q(x_i(t))\quad  \text{for every $i\in I$}.$$
\end{theorem}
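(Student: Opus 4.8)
The plan combines a monotonicity argument that confines the state, a reduction to finitely many quantization levels, and a connectivity argument that forbids a persistent gap between levels.

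\emph{Step 1 (confinement, finitely many levels).} I would first show that $M(t):=\max_{i\in I}x_i(t)$ is non-increasing and $m(t):=\min_{i\in I}x_i(t)$ is non-decreasing. At a.e.\ $t$ all agents attaining the maximum share a common derivative (on $\{x_i=x_j\}$ the derivative of $x_i-x_j$ vanishes a.e.); among them take the one, $p$, with the largest selected value $v_p\in\K q(x_p(t))$. For $j$ with $x_j(t)<x_p(t)$ the monotonicity of $q$ gives $v_j\le q(x_j^+)\le q(x_p^-)\le v_p$, and for the other maximizers $v_j\le v_p$ by the choice of $p$; hence $\dot M(t)=\dot x_p(t)=\sum_j a_{pj}(t)(v_j-v_p)\le 0$, and symmetrically $\dot m(t)\ge 0$. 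Thus every solution stays in the compact box $[m(0),M(0)]^I$, on which $q$ --- having discrete range --- takes only finitely many values $\ell_1<\dots<\ell_K$; set $\delta:=\min_k(\ell_{k+1}-\ell_k)>0$. Consequently $q(M(t))$ is non-increasing and $q(m(t))$ non-decreasing, each with finitely many values, so there is $T_1\ge0$ after which $q(M(t))\equiv\bar\ell$ and $q(m(t))\equiv\underline\ell$ are constant.

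\emph{Step 2 (reformulation).} Since $\K q(x)=[q(x^-),q(x^+)]$ and $x\mapsto q(x^\pm)$ are non-decreasing, the desired conclusion ``$\exists\,s^*\in\bigcap_{i\in I}\K q(x_i(t))$'' is equivalent to $q(M(t)^-)\le q(m(t)^+)$, so the theorem asserts that this inequality holds for all large $t$. I would argue by contradiction: if it fails for every $t$, then by Step~1 there is, for $t\ge T_1$, a persistent strict gap $\bar\ell>\underline\ell$, and the agents can neither all lie in the top cell $q^{-1}(\{\bar\ell\})$ nor all in the bottom cell $q^{-1}(\{\underline\ell\})$.

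\emph{Step 3 (contradiction via $\mathcal T(\G_\infty)$).} By Section~2, $\mathcal T(\G_\infty)$ connected with a single sink is equivalent to the existence of a globally reachable node $r$ of $\G_\infty$: every $i$ admits a path $i\to i_1\to\dots\to r$ in $\G_\infty$, along whose edges the interaction weights have divergent integral on $[t_0,\infty)$ for every $t_0$. An agent strictly inside the top cell has $v_i=\bar\ell$, hence $\dot x_i=\sum_j a_{ij}(v_j-\bar\ell)\le 0$, with velocity $\le-\aLow\delta$ whenever it has an active incoming edge from an agent strictly inside a lower cell. Following a top agent's path toward $r$: if the path ever contains an agent lying below level $\bar\ell$, the divergent-weight condition eventually feeds a downward drift along the path that pushes the top agent out of its cell within a bounded time; otherwise the whole path to $r$ --- and hence $r$ itself --- stays at level $\bar\ell$, and then the analogous upward drift propagated from $r$ along the path of a bottom agent lifts that agent above level $\underline\ell$ within a bounded time. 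Either way a monotone quantity ($q(M)$ or $q(m)$) strictly changes; iterating over successive time windows that realize the joint connectivity of $\G_\infty$ closes the gap after finitely many steps, contradicting $\bar\ell>\underline\ell$ for all $t$. This yields a finite $\Tcon$ and the level $s^*$ (the eventual common $q$-value, or a shared boundary value) of the statement.

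\emph{Main obstacle.} The delicate point is the behaviour of Krasovskii solutions that \emph{rest on a discontinuity surface}: an agent pinned exactly at a jump point of $q$ may have zero velocity for a non-extremal selection $v_i\in\K q(x_i)$, so ``the extreme level strictly changes'' can genuinely fail and a cell boundary need not be crossed in bounded time. The way out is a dichotomy: either such a pinned configuration is already a consensus configuration --- a pinned agent carries a \emph{wide} Krasovskii interval $[q(x_i^-),q(x_i^+)]$, which is exactly what makes $\bigcap_i\K q(x_i(t))\neq\emptyset$ --- or the pinning cannot be maintained by enough agents simultaneously, because the persistent connectivity keeps injecting a strictly downward (resp.\ upward) drift into the cell interiors adjacent to the pinned surface, eventually dislodging it. Making this dichotomy rigorous, and obtaining the ``bounded time'' estimates uniformly along the chains of agents, is where the bulk of the work lies; the later remark that the convergence time can be very large is the quantitative shadow of these near-pinned trajectories.
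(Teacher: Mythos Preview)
Your overall architecture matches the paper's: confine the states by monotonicity of the extrema, reduce to finitely many quantization levels, and argue by contradiction that a persistent gap between the extreme levels is incompatible with the connectivity of $\G_\infty$. Your Step~2 reformulation via $q(M^-)\le q(m^+)$ is clean and equivalent to the paper's index-set bookkeeping.

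The genuine gap is in your resolution of the ``main obstacle''. Your dichotomy asserts that either the pinned configuration is already consensus, or ``the pinning cannot be maintained by enough agents simultaneously''. The second branch is false in general: in the paper's Example~\ref{ex:Tcon-N}, agents $2,\dots,N-1$ remain pinned at the discontinuity $x=\tfrac12$ for the \emph{entire} interval $[0,\Tcon]$, while agent~$1$ is strictly below and agent~$N$ strictly above; nothing is dislodged, and yet consensus is eventually reached because agent~$1$ is slowly dragged upward \emph{by} the pinned agents. So the mechanism is not ``dislodge the pinned surface'' but ``extract a guaranteed positive pull from it''.

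The idea you are missing is this. When agent $j$ sits on the boundary between levels $s_m$ and $s_{m+1}$, the Krasovskii selection is $v_j=(1-\alpha_j)s_m+\alpha_j s_{m+1}$ for some $\alpha_j\in[0,1]$, and the fact that $x_j$ stays on the discontinuity forces $\dot x_j=0$ a.e.\ there. This turns into an \emph{equation} for $\alpha_j$ in terms of the selections $v_k$ of $j$'s neighbours. Solving it and iterating along a path in $\G_\infty$ toward an agent strictly above level $s_m$ yields a uniform lower bound $\alpha_j\ge\big(\tfrac{\aLow}{N\aUp}\big)^N$. It is this bound --- not any dislodging --- that makes the drift on a minimum-level agent $i$ with $(i,j)\in\E_\infty$ strictly positive and of divergent integral, forcing $x_i$ to leave the bottom cell in finite time. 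The exponential-in-$N$ smallness of this bound is precisely the ``quantitative shadow'' you mention, but you need the bound itself to close the argument; your Step~3 as written does not produce it.
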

\begin{proof}
Without loss of generality, we may think of the elements of $\qset$ as indexed in a set $A$ of consecutive integers, in such a way that $\qset=\setdef{s_a}{a\in A}$ and $s_a<s_b$ if and only if $a<b$.
Let $\Delta_{\min}=\inf\setdef{|s_a-s_b|}{a,b\in A}.$ 
As there is no limit point of $\qset$, then $\Delta_{\min}>0.$ 

Given the solution $x(\cdot)$ and $z\in \real$, we define the following time-dependent subset of indices $$I_z(t)=\setdef{i\in I}{z\in \K q(x_i(t))},$$ and we let $m(t)=\min_{i\in I} \min \K q(x_i(t))$ and $M(t)=\max_{i\in I} \max \K q(x_i(t))$. By definition, $M(t)$ and $m(t)$ belong to $\qset$ and we denote $m(0)=s_m$ and $M(0)=s_M.$
The dynamics~\eqref{eq:quantized-dynamics} implies that, at almost every time $t$ and for all $i\in I$,
\be\label{eq:dotxi-in-proof}\dot x_i(t)\in\bigsetdef{\sum_{j\in I} a_{ij}(t) (z_j-z_i)}{z_k\in \K q(x_k(t))}.\ee
In particular, if $i\in I_{m(t)}(t)$, then $\dot x_i(t)\in [0,+\infty)$. Hence, $m(t)\ge m(0)$ for all $t\ge0$; similarly, we can deduce that $M(t)\le M(0).$ Our proof aims at showing that $m(t)$ actually increases until the system reaches an equilibrium: there exist $\Tcon\ge0$ and $s^*\in \qset$ such that $I_{s^*}(\Tcon)=I$. The same conclusion can be reached by an analogous argument based on $M(t)$.
Note that for all $t\ge0$ it holds $I=\Union_{s=s_m}^{s_M} I_s(t)$, but sets of the form $I_{s_h}(t)\intersection I_{s_{h+1}}(t)$ need not to be empty, in particular when some $x_k(t)$ is at a discontinuity point of $q$. 

In view of the last remark, we denote for brevity
$\borderI_{s_h}(t)=I_{s_h}(t)\intersection I_{s_{h+1}}(t)$ and $\intI_{s_h}(t)=I_{s_h}(t)\setminus I_{s_{h+1}}(t)$,
and we start our argument by considering the set $\intI_{s_m}(t)$ and claiming that
\begin{equation}\label{eq:invariance}
\intI_{s_{m}}(t_1)\supseteq  \intI_{s_{m}}(t_2)\qquad\text{for all $t_2\ge t_1\ge 0$.
}\end{equation} %
We show this fact by contradiction.
Let $x_0\in \real$ be the discontinuity point of $q$ such that $k\in \borderI_{s_m}(t)$ if and only if $x_k(t)=x_0.$ Assume, by contradiction, that there exists an agent $i\in I$ such that $x_i(t_1)>x_0$ and $x_i(t_2)<x_0$. Then there are three consequences:
(i) by continuity, there exists $t'\in (t_1,t_2)$ such that $x_i(t')=x_0$;
(ii) consequently, $x_i(t_2)=x_0+\int_{t'}^{t_2} \dot x_i(s)ds$;  
(iii) $x_i(t)<x_0$ for $t\in (t',t_2)$, and since $i\in \intI_{s_{m}}(t)$, then necessarily $\dot x_i(t)\ge0$.
But (ii) implies that, for a set of times of positive measure, $\dot x_i(t)<0,$ which is a contradiction. 
We conclude that~\eqref{eq:invariance} holds and that if an agent $k$ reaches $I_{s_{m}}(t)$ (from the right), she necessarily has to stop at the border of the corresponding interval. 

Next, we want to prove that there exist times when the inclusion~\eqref{eq:invariance} is strict.
We define the set of the agents whose state is ``strictly larger'' than $s_m$ as $$\plusI_{s_{m}}(t)=\displaystyle\left(\Union_{h=m+1}^{M} I_{s_h}(t) \right)\setminus I_{s_{m}}(t)$$
and 
$\Tempty=\inf\setdef{t\ge0}{\plusI_{s_{m}}(t)=\emptyset}.$ If $\Tempty$ is finite, then $\plusI_{s_{m}}(t)=\emptyset$ for every $t\ge \Tempty$. Then $I_{s_m}(\Tempty)=I$ and we conclude that $\Tcon=\Tempty$ and $s^*=s_{m}$, completing the proof. 
Otherwise, we proceed with our argument and assume\footnote{If $\intI_{s_m}(0)=\emptyset$, then there is nothing to prove: since in this case $I_{s_m}(0)\subseteq \intI_{s_{m+1}}(0)$, we can start our argument from $\intI_{s_{m+1}}(0).$} by contradiction that 
$\intI_{s_{m}}(t)=  \intI_{s_{m}}(0)$ for all $t>0$.
We also temporarily assume that $\G_\infty$ is strongly connected: the argument will be extended at the end of the proof. Then, thanks to the strong connectivity of $\G_\infty$, we can find an arc $(i,j)\in \E_\infty$ such that $i$ belongs to $\intI_{s_{m}}(0)$ and $j$ does not. 
As a consequence of~\eqref{eq:invariance}, $j\notin \intI_{s_{m}}(t)$ for all $t\ge0,$
and by contradiction we know that $i\in \intI_{s_m}(t)$ for all $t\ge0.$
Notice that for almost every $t\ge0$, 
$$\dot x_i(t)\ge a_{ij}(t)\, \big(v_j(t)-q(x_i(t))\big),$$
where $v_j(t)\in \K q(x_j(t))$ is the realization of the inclusion in~\eqref{eq:dotxi-in-proof}.
Define
$ J_\partial=\setdef{t\ge0}{j\in  \borderI_{s_{m}}(t)} $
and 
$ J_+=\setdef{t\ge0}{j\in  \plusI_{s_{m}}(t)}.$
Then 
\begin{align}x_i(t) \ge\, & x_i(0)+\int_{0}^{t} a_{ij}(s) \big(v_j(s)-q(x_i(t))\big)ds \label{eq:has-to-diverge-a}\\
\nonumber
=\, & x_i(0)+\int_{J_\partial \intersection (0,t)} a_{ij}(s) \big(v_j(s)-q(x_i(t))\big) ds +\int_{J_+ \intersection (0,t)} a_{ij}(s) \big(v_j(s)-q(x_i(t))\big) ds\\
\nonumber\ge\, & x_i(0)+\Delta_{\min} \int_{J_\partial \intersection (0,t)} a_{ij}(s) \alpha_j(s)ds +\Delta_{\min}\int_{J_+ \intersection (0,t)} a_{ij}(s) ds 
\end{align}
where in the last inequality we have used the fact that if $s\in J_\partial$, then $$v_j(s)=s_m (1-\alpha_j(s)) + s_{m+1} \alpha_j(s)=s_{m} + \alpha_j(s)(s_{m+1}-s_{m}),$$ and $\alpha_j(s)$ is a measurable function taking values in $[0,1].$
Let $q^{-1}(s_m)$ denote the pre-image of $s_m$ under $q$. If $\sup{q^{-1}(s_m)}=+\infty$, then necessarily $s_m=\max \qset$ and the proof is completed since $I_{s_m}(0)=I$. Otherwise, 
we aim to show that the right-hand side of~\eqref{eq:has-to-diverge-a} is divergent as $t\to\infty$.
If $J_+$ has infinite measure, divergence is clear from the assumption $a_{ij}(s)\ge \aLow.$ Otherwise, $\lim_{t\to\infty} \int_{J_+ \intersection (0,t)} a_{ij}(s) ds<\infty$ and instead $J_\partial$ has infinite measure: we want to use this fact, together with a lower bound on $\alpha_j(s)$. To obtain such an estimate, we note that Equation~\eqref{eq:invariance} implies that $\dot x_k(t)=0$ for almost every $t$ such that $k\in \borderI_{s_{m}}(t)$. 
Then, for almost every $s\in J_\partial$ it holds 
$\dot x_j(s)=0$, and the equality
$$\dot x_j(s)= \sum_{k} a_{jk} (v_k(s)-v_j(s))=
\sum_{k} a_{jk} \big(v_k(s)-\alpha_j(s) (s_{m+1}-s_m) - s_m\big)
$$
implies that 
\begin{align*}
\alpha_j(s)=& \frac{ \sum_{k}a_{jk}(s) (v_k(s)-s_m)}  {(s_{m+1}-s_m) \sum_{k}a_{jk}}\\
\ge& \frac
{\sum_{k\in \borderI_{s_{m}}(s)} a_{jk}(s) \alpha_k(s) + \sum_{k\in \plusI_{s_m}(s)} a_{jk}(s)}
 { \sum_{k}a_{jk} }
\\ \ge &
 \frac{\aLow}{N \aUp} \beta_k(s),
\end{align*}
where $\beta_k(s)$ is defined as follows. Let $l\in I$ be such that $(k,l)\in \E_{\infty}$ and for all $t$ in a set of times of infinite measure either $l\in \plusI_{s_m}(t)$ or $l\in \borderI_{s_m}(t)$. In the former case $\beta_k(s)=1$, in the latter $\beta_k(s)=\alpha_l(s).$
By the connectivity assumption, there exists an infinite-measure set of times $J$ such that for $s\in J$ there is a path in $\G(s)$ from $j$ to a node in $\plusI_{s_m}(s)$, and by a recursive reasoning along this path, we conclude that for $s\in J$ it holds $\alpha_j(s)\ge \left( \frac{ \aLow}{N \aUp} \right)^N$.
From~\eqref{eq:has-to-diverge-a} and the last inequality we can deduce
\begin{align}\label{eq:has-to-diverge-final}
x_i(t)
\ge &\, x_i(0)+\Delta_{\min} \int_{J\intersection (0,t)} \aLow \left( \frac{\aLow}{N \aUp} \right)^N.
\end{align}
This inequality implies that $x_i(t)$ diverges as $t\to+\infty$, which contradicts the fact that $x_i(t)\le M(0)$ for all $t\ge0.$
We conclude that there exists $T'>0$ such that for all $t\ge T'$ it holds $i\not\in \intI_{s_m}(t)$ and
 $\intI_{s_m}(t)\subsetneq\intI_{s_m}(0).$
Repeating this argument for every element of $\intI_{s_m}(0)$, we obtain that there exists $T_0>0$ such that $\intI_{s_{m}}(T_0)=\emptyset$.
%
%
Afterwards, the same reasoning which has been applied to $\intI_{s_m}$ can be applied, with straightforward modifications, to $\intI_{s_{m+1}}$, $\intI_{s_{m+2}}$, \ldots, showing that there exists a sequence of times $T_k$ such that $\intI_{s_{m+k}}(T_k)=\emptyset.$ Since $M(t)\le s_{M}$, then the sequence of $T_k$'s must be finite.
This implies that there exist $\Tcon$ and $s^*$ such that $I_{s^*}(\Tcon)=I$, under the assumption of strong connectivity of $\G_\infty$.

In order to complete the proof, we still have to relax the connectivity condition.
If $\G_\infty$ is not strongly connected, the above argument may fail, because at some time it may be impossible to find an arc $(i,j)$ coming out of the set of minima --say, the set $\intI_{s_m}(0)$. But in such a case, necessarily the sink component is a subset of $\intI_{s_m}(0)$. Then, since it is assumed that there is only {\em one} sink, it is still possible to conclude by applying the analogous argument based on the maximal value $M(t)$.
%
%
\end{proof}

Note that the assumptions of Theorem~\ref{th:general-conv} about $\qset$ are satisfied, for instance, when $\qset$ is a finite set or when $\qset=\Delta \integer$. The latter important special case is the topic of the next section.


\section{Uniform quantizers}\label{sec:uniform}
In this section, we assume that the states are communicated via a uniform quantizer, and we derive from Theorem~\ref{th:general-conv} a more precise convergence result. After that, we study the case of average-preserving dynamics, and we estimate the convergence time $\Tcon.$

Let then $q$ be the uniform quantizer with precision $\Delta>0$, that is the map $\map{q}{\real}{\Delta\integer}$ such that 
\be\label{eq:def-unif-quant}q(z)=\left\lfloor \frac{z}{\Delta}+\frac{1}{2} \right\rfloor\Delta.\ee
The maps $q$ and $\K q(x)$ are illustrated in Figure~\ref{fig:Kq-map}.

\begin{figure*}[htb]
\begin{center} \begin{tabular}{cc}
{\psfrag{xlabel}{\footnotesize $x$} 
\psfrag{ylabel} [][][1][-90]{\hspace{-5mm} \footnotesize $q(x)$}
    \includegraphics[width=.49\textwidth]{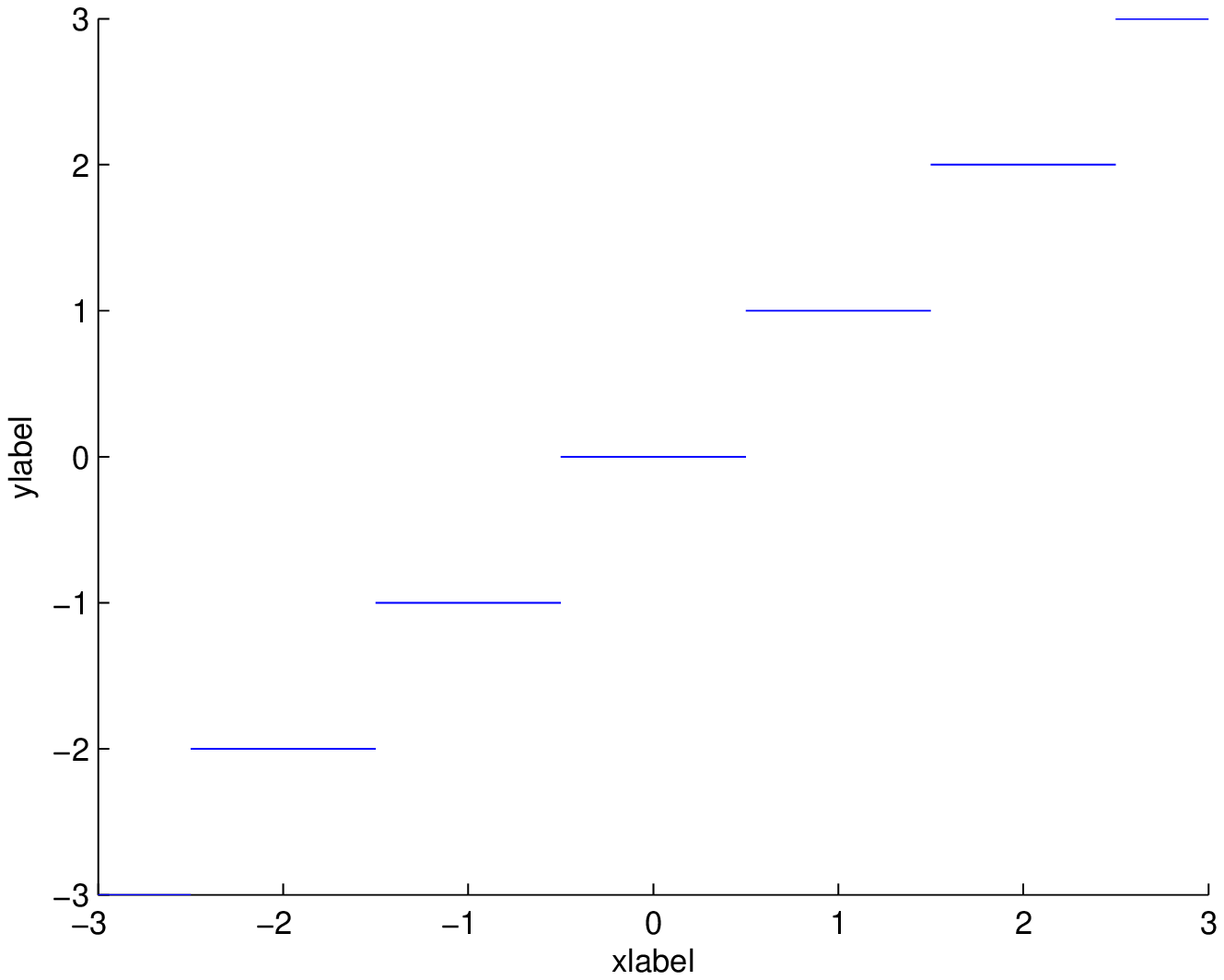}}&
{    \psfrag{xlabel}{\footnotesize $x$} 
\psfrag{ylabel}[][][1][-90]{\hspace{-5mm} \footnotesize$\K q(x)$}
    \includegraphics[width=.49\textwidth]{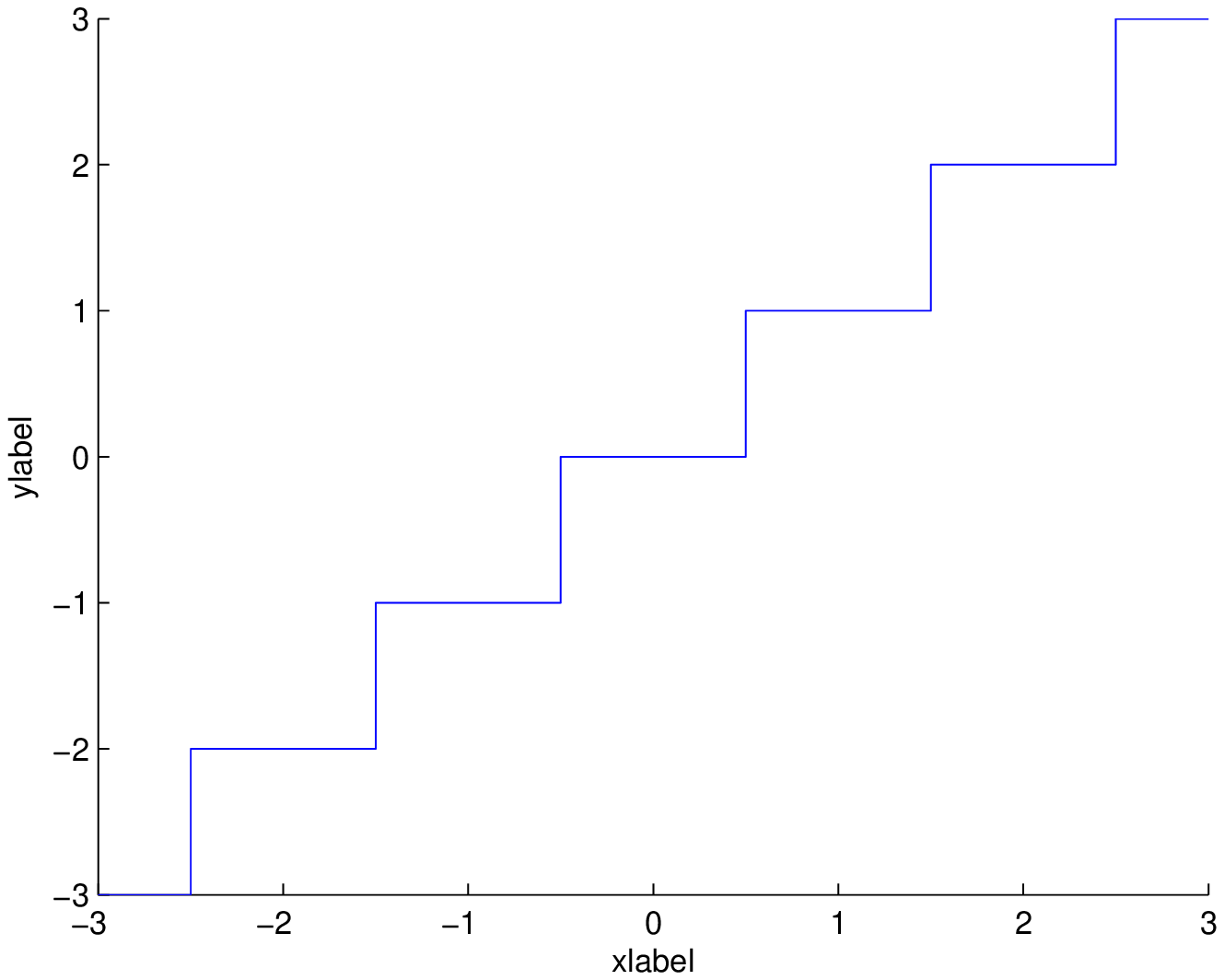}     }
    \end{tabular}
    \caption{Visualization of the map $q(x)$ in~\eqref{eq:def-unif-quant} and the corresponding set-valued map $\K q(x)$, when $\Delta=1$.}\label{fig:Kq-map}
\end{center} \end{figure*}

\begin{corollary}[Uniform quantizers]\label{corol:unif-conv}
Let $x(t)$ be a Krasovskii solution to~\eqref{eq:quantized-dynamics} and $q$ be defined as in~\eqref{eq:def-unif-quant}. If $\mathcal T(\G_\infty)$ is connected and has only one sink, then there exist $\Tcon\ge0$ and $q_\infty\in \Delta \integer$  such that for all $t\ge \Tcon,$
$$ x_i(t)\in \left[q_\infty - \frac\Delta2 , q_\infty + \frac\Delta2 \right]\qquad \text{for all } i\in I.$$
\end{corollary}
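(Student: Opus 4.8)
The plan is to derive the corollary directly from Theorem~\ref{th:general-conv}; the only genuine step is to translate the set-valued conclusion $s^*\in\K q(x_i(t))$ into a two-sided bound on $x_i(t)$ itself. First I would verify that the uniform quantizer~\eqref{eq:def-unif-quant} meets the hypotheses of Theorem~\ref{th:general-conv}: its codomain is $\qset=\Delta\integer$, which has no limit point in $\real$, and the map $z\mapsto\lfloor z/\Delta+1/2\rfloor\Delta$ is non-decreasing, since the floor of a non-decreasing function is non-decreasing. Hence, whenever $\mathcal T(\G_\infty)$ is connected and has only one sink, Theorem~\ref{th:general-conv} supplies $\Tcon\ge0$ and $s^*\in\Delta\integer$ such that $s^*\in\K q(x_i(t))$ for every $i\in I$ and every $t\ge\Tcon$. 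I would then put $q_\infty:=s^*$.

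The second step is the elementary identity, illustrated in the right panel of Figure~\ref{fig:Kq-map}, that for the quantizer~\eqref{eq:def-unif-quant} one has $s\in\K q(x)$ with $s\in\Delta\integer$ if and only if $x\in[s-\Delta/2,\,s+\Delta/2]$. To see this I would note that the discontinuity points of $q$ are exactly the points of $\Delta(\integer+\tfrac12)$; at every other point $\K q(x)=\{q(x)\}$, while $\K q(s'+\tfrac\Delta2)=[s',s'+\Delta]$ for $s'\in\Delta\integer$. If $x$ is a continuity point, $s\in\K q(x)$ means $q(x)=s$, i.e. $x\in[s-\tfrac\Delta2,s+\tfrac\Delta2)$ by~\eqref{eq:def-unif-quant}. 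If $x=s'+\tfrac\Delta2$ is a discontinuity point, then $s\in[s',s'+\Delta]\cap\Delta\integer=\{s',s'+\Delta\}$, so $x$ equals $s+\tfrac\Delta2$ or $s-\tfrac\Delta2$; in either case $x\in[s-\tfrac\Delta2,s+\tfrac\Delta2]$. The reverse inclusion is checked in the same way. Applying this characterization with $s=s^*=q_\infty$ and $x=x_i(t)$ for each $i\in I$ and each $t\ge\Tcon$ yields the stated bound.

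I do not anticipate a real obstacle: the argument is essentially bookkeeping of the level sets of $q$ and of its Krasovskii regularization. The one point to watch is the behaviour at the discontinuity points of $q$, where $\K q$ is genuinely an interval; this is precisely why the \emph{closed} interval $[q_\infty-\Delta/2,\,q_\infty+\Delta/2]$, rather than the half-open pre-image $q^{-1}(q_\infty)=[q_\infty-\Delta/2,\,q_\infty+\Delta/2)$, is the correct set appearing in the conclusion.
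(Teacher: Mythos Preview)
Your proposal is correct and follows the same approach as the paper's own proof: apply Theorem~\ref{th:general-conv} to obtain $s^*\in\K q(x_i(t))$, then observe that for the uniform quantizer this membership is equivalent to $x_i(t)\in[s^*-\Delta/2,\,s^*+\Delta/2]$. The paper simply asserts this equivalence in one line, whereas you spell out the case analysis at continuity and discontinuity points of $q$; your added care about why the interval must be closed is a welcome clarification.
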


\begin{proof}
Since $\qset=\Delta\integer$, Theorem~\ref{th:general-conv} implies that 
there exist a nonnegative time $\Tcon$ and an integer $k$ such that for all $t\ge \Tcon,$ it holds
$$ k\Delta\in \K q(x_i(t)) \qquad \text{for all } i\in I.$$
This fact is equivalent to the statement of the corollary.
%
%
\end{proof}

\subsection{Average consensus}
In many applications one is concerned, rather than with mere convergence, with convergence to a certain target value, which is a function of the initial condition. For instance, the target can be the average of the initial states: this problem is referred to as the {\em average consensus problem}, and is studied in the next result.

\begin{corollary}[Average-preserving dynamics]\label{corol:average}
Let $x(t)$ be Krasovskii solution to~\eqref{eq:quantized-dynamics} and $q$ as in~\eqref{eq:def-unif-quant}. Define $\xave(t)=\frac 1N\sum_{j\in I}x_j(t)$. 
If $\mathcal T(\G_\infty)$ is connected and has only one sink, and $$\sum_{j\in I}{ a_{ij}(t)}=\sum_{i\in I}{ a_{ij}(t)} \qquad\text{for almost every $t\ge 0$},$$ then $\xave(t)=\xave(0)$ for every $t>0$ and
the conclusion of Corollary~\ref{corol:unif-conv} holds. Moreover, if $\xave(0)\neq (k+\frac12) \Delta$ for every $k\in \integer$, then $q_\infty=q(\xave(0))$, whereas if 
$\xave(0)=(h+\frac12)\Delta$ for some $h\in \integer$, then $x_i(\Tcon)=\xave(0)$ for every $i\in I.$
\end{corollary}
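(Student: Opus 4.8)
The plan is to derive everything from Corollary~\ref{corol:unif-conv} together with two elementary facts: conservation of the average, and the geometry of the cells of the uniform quantizer~\eqref{eq:def-unif-quant}.

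First I would prove that $\xave$ is constant. Since $x(\cdot)$ is a Krasovskii solution, for almost every $t$ there is a vector $v(t)\in\K q(x(t))$ with $\dot x(t)=-L(t)\,v(t)$, where $L(t)$ is the Laplacian of $A(t)$. The balance hypothesis says that for every node the total outgoing weight equals the total incoming weight, which is exactly the statement that $\1^\top L(t)=0$ for almost every $t$. Hence $\1^\top\dot x(t)=-\1^\top L(t)\,v(t)=0$ for almost every $t$, and since $\xave(\cdot)=\frac1N\1^\top x(\cdot)$ is absolutely continuous with almost everywhere vanishing derivative, it is constant; thus $\xave(t)=\xave(0)$ for all $t>0$.

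Next, the connectivity hypothesis is precisely that of Corollary~\ref{corol:unif-conv}, so there exist $\Tcon\ge0$ and $q_\infty=k\Delta\in\Delta\integer$ such that $x_i(t)\in[q_\infty-\tfrac\Delta2,\,q_\infty+\tfrac\Delta2]$ for every $i\in I$ and every $t\ge\Tcon$. Averaging over $i$ at time $\Tcon$ and using the conservation of the average gives $\xave(0)=\xave(\Tcon)\in[q_\infty-\tfrac\Delta2,\,q_\infty+\tfrac\Delta2]$. Now I would read off $q_\infty$: the cell of~\eqref{eq:def-unif-quant} on which $q$ takes the value $q_\infty$ is the half-open interval $[(k-\tfrac12)\Delta,\,(k+\tfrac12)\Delta)$, and the only points of $\tfrac\Delta2\integer\setminus\Delta\integer$ lying in $[q_\infty-\tfrac\Delta2,\,q_\infty+\tfrac\Delta2]$ are its two endpoints. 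Therefore, if $\xave(0)\neq(k+\tfrac12)\Delta$ for every $k\in\integer$, then $\xave(0)$ belongs to the open interval $(q_\infty-\tfrac\Delta2,\,q_\infty+\tfrac\Delta2)$, on which $q\equiv q_\infty$, so $q(\xave(0))=q_\infty$. If instead $\xave(0)=(h+\tfrac12)\Delta$ for some $h\in\integer$, then $\xave(0)$ must coincide with one of the two endpoints of $[q_\infty-\tfrac\Delta2,\,q_\infty+\tfrac\Delta2]$; since $\xave(\Tcon)$ is the arithmetic mean of the numbers $x_i(\Tcon)$, all of which lie in that closed interval, and the mean of points of a closed interval equals an endpoint only when every point equals that endpoint, I conclude $x_i(\Tcon)=\xave(0)$ for every $i\in I$.

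I do not expect a genuine obstacle here: once Corollary~\ref{corol:unif-conv} is in hand the result is essentially bookkeeping. The only step requiring a little care is the first one, namely justifying that the balance condition $\sum_j a_{ij}(t)=\sum_j a_{ji}(t)$ amounts to $\1^\top L(t)=0$ and that a left-null row vector of $L(t)$ annihilates the whole Krasovskii selection $v(t)\in\K q(x(t))$ (it does, since $\1^\top L(t)=0$ as a row vector annihilates every vector of $\real^I$), together with the standard fact that an absolutely continuous function with almost everywhere zero derivative is constant. The remaining content is just the elementary geometry of uniform-quantizer cells and the equality case of the averaging inequality.
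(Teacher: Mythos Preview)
Your proposal is correct and follows essentially the same route as the paper: show $\mathbf{1}^\top L(t)=0$ from the balance condition to get $\xave$ constant, apply Corollary~\ref{corol:unif-conv} to trap $\xave(0)$ in $[q_\infty-\tfrac\Delta2,q_\infty+\tfrac\Delta2]$, and then split on whether $\xave(0)$ is an endpoint. Your write-up is in fact more explicit than the paper's on the equality case of the averaging argument, but the underlying reasoning is identical.
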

\begin{proof}
By linearity, for almost every $t>0$
\begin{align*}
\frac{d}{dt} \xave(t)\in \K\left(\frac1N \sum_{i\in I} \sum_{j\in I} (a_{ij}(t)-a_{ji}(t))q(x_j(t))\right).
\end{align*}
By the assumption on the $a_{ij}$'s, this implies that $\frac{d}{dt} \xave(t)=0$ for almost every $t>0$, so that the average is preserved.
Corollary~\ref{corol:unif-conv} then implies that $\left[q_\infty - \frac\Delta2 , q_\infty + \frac\Delta2 \right] \ni \xave(\Tcon)=\xave(0)$. If in particular $\xave(0)\in \left(q_\infty - \frac\Delta2 , q_\infty + \frac\Delta2 \right),$ then it is clear that $q(\xave(0))=q_\infty$. Otherwise, being $\xave(\Tcon)$ at the border of the interval, necessarily all $x_i(\Tcon)$ must coincide.
\end{proof}
Note that Corollary~\ref{corol:average} provides a formula for the limit (quantized) value, and also a sufficient condition to achieve exact consensus between the states.
Corollary~\ref{corol:average} improves on earlier convergence results available in the literature about average consensus of Krasovskii solutions (cf.~\cite[Proposition~4]{FC-CDP-PF:10a}), as it shows finite-time convergence for every initial condition and allows for time-dependent topologies.

\subsection{Convergence time}
In order to estimate the convergence time in Corollary~\ref{corol:unif-conv}, we restrict ourselves to consider {\em time-invariant} topologies, in the following sense.  We assume that for every pair $(i,j)$, either $a_{ij}(t)=0$ for all $t\ge0$ or $a_{ij}(t)\in[\aLow,\aUp]$ for all $t\ge0$, so that we may write $\G(t)=(I, \E, A(t))$ and $\G_\infty=(I,\E)$.

\begin{proposition}[Estimate of $\Tcon$]\label{prop:Tcon-est}
 Let $x(t)$ be Krasovskii solution to~\eqref{eq:quantized-dynamics} and $q$ as in~\eqref{eq:def-unif-quant}. 
Assume that $\G(t)$ has time-invariant topology, $\mathcal T(\G_\infty)$ is connected and has only one sink. Then,
\be\label{eq:Tcon}
\Tcon\le  \frac{1}{\Delta}\frac{N}{\aLow}  \left(\frac{N \aUp}{\aLow}\right)^N\, \max_{i,j\in I}|q(x_i(0))-q(x_j(0))|.
\ee 
\end{proposition}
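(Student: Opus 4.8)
The plan is to revisit the proof of Theorem~\ref{th:general-conv} and quantify each step, since for a time-invariant topology $\G_\infty = \G$ itself contains all the edges. First I would observe that the quantity which the proof shows must eventually reach its maximum is $m(t) = \min_{i} \min \K q(x_i(t))$, which takes values in $\Delta\integer$ and can only increase. By the average-consensus-type bookkeeping, the total ``amount'' by which $m$ must climb before $I_{s^*}(\Tcon)=I$ is at most $\max_{i,j}|q(x_i(0))-q(x_j(0))|/\Delta$ steps of size $\Delta$ each; the whole estimate then reduces to bounding the time needed for a single step, i.e.\ the time for $\intI_{s_m}(t)$ to become empty once we know some agent sits strictly above $s_m$.

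The key step is to make inequality~\eqref{eq:has-to-diverge-final} quantitative under the time-invariant assumption. In the time-invariant case the set $J$ on which a path in $\G$ runs from $j$ to $\plusI_{s_m}$ is \emph{all} of $[0,\infty)$ as long as $\plusI_{s_m}$ is nonempty, because the edge set $\E$ never changes and $\G = \G_\infty$ has the required reachability. Hence, as long as $\plusI_{s_m}(t)\neq\emptyset$ and some $i$ remains in $\intI_{s_m}(t)$, the bound reads $\dot x_i(t) \ge \aLow \Delta \left(\frac{\aLow}{N\aUp}\right)^N$ for a.e.\ such $t$. Since $x_i$ is confined to an interval of length at most $\max_{i,j}|q(x_i(0))-q(x_j(0))|$ (as $m$ and $M$ are monotone), the time for a fixed agent $i$ to be forced out of $\intI_{s_m}$ is at most $\max_{i,j}|q(x_i(0))-q(x_j(0))| / \big(\aLow \Delta (\aLow/(N\aUp))^N\big)$. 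Doing this for each of the at most $N$ agents in $\intI_{s_m}(0)$ sequentially — and being careful to note the relevant interval length can only shrink as we progress — gives a bound of $N$ times that quantity to empty $\intI_{s_m}$.

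Then I would chain the estimates: emptying $\intI_{s_m}$, then $\intI_{s_{m+1}}$, etc. A naive chaining would give an extra factor equal to the number of quantizer levels spanned, which is $\max_{i,j}|q(x_i(0))-q(x_j(0))|/\Delta$; but the correct accounting is subtler, since each time $\intI_{s_{m+k}}$ is emptied the value $m(t)$ actually jumps up by $\Delta$, so the \emph{total} displacement accumulated by the ever-changing ``bottom agent'' across all phases telescopes. The cleanest route is to argue directly that the time-derivative lower bound $\aLow\Delta(\aLow/(N\aUp))^N$ applies to \emph{whichever} agent currently realizes the minimum and is interior, so that the cumulative increase of $m(t)$ up to time $\Tcon$ is at least $\Tcon$ times roughly $\frac{\aLow\Delta}{N}(\aLow/(N\aUp))^N$ (the $1/N$ absorbing the fact that only one of the $N$ agents need be moving at a time), while the cumulative increase is at most $\max_{i,j}|q(x_i(0))-q(x_j(0))|$. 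Rearranging yields exactly~\eqref{eq:Tcon}.

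The main obstacle I anticipate is precisely this last chaining argument: ensuring that the ``one step at a time, one agent at a time'' losses combine to give the single factor $N/\aLow$ rather than a worse compounded dependence, and handling rigorously the possibility that several agents in $\intI_{s_m}$ move simultaneously (which only helps) versus the worst case where they must be evicted one after another. A secondary technical point is the non-strongly-connected case: there the argument may have to switch to $M(t)$, but by the time-invariance the same quantitative bound applies symmetrically, so the estimate is unchanged — this just needs a remark rather than new work.
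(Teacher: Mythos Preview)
Your plan is the same as the paper's---specialize the proof of Theorem~\ref{th:general-conv}, use that in the time-invariant case the set $J$ is all of $[0,\infty)$, and extract the derivative lower bound $\dot x_i\ge \aLow\Delta\left(\frac{\aLow}{N\aUp}\right)^N$ for an agent $i\in\intI_{s_m}$ with an outgoing edge to the complement. Where you go wrong is the length scale you pair with that velocity.

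You bound the time for $i$ to be evicted from $\intI_{s_m}$ by $\max_{k,l}|q(x_k(0))-q(x_l(0))|$ divided by the velocity bound, arguing that $x_i$ is confined to an interval of that length. But the exit condition from $\intI_{s_m}$ is much weaker: $i\in\intI_{s_m}(t)$ means $x_i(t)\in[s_m-\tfrac{\Delta}{2},\,s_m+\tfrac{\Delta}{2})$, so $x_i$ only has to travel a distance at most $\Delta$ to reach the boundary $s_m+\tfrac{\Delta}{2}$ and leave $\intI_{s_m}$. That gives a per-agent eviction time of at most $\dfrac{1}{\aLow}\left(\dfrac{N\aUp}{\aLow}\right)^N$, hence a per-level time $T_k-T_{k-1}\le \dfrac{N}{\aLow}\left(\dfrac{N\aUp}{\aLow}\right)^N$ that is \emph{independent of the total spread}. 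Multiplying by the number of levels $(M(0)-m(0))/\Delta$ then gives~\eqref{eq:Tcon} directly---no telescoping is needed.

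Your attempted telescoping fix is where the real gap lies. You assert that the cumulative increase of $m(t)$ is at least $\Tcon\cdot\frac{\aLow\Delta}{N}\left(\frac{\aLow}{N\aUp}\right)^N$, but this is not justified: $m(t)$ is a step function which jumps only when \emph{all} agents have left $\intI_{s_m}$, and the derivative bound does not apply to ``whichever agent realizes the minimum'' but only to a specific $i\in\intI_{s_m}$ with an edge into the complement (this $i$ need not be the minimizer). Without the $\Delta$ length-scale observation, there is no clean way to turn the pointwise velocity bound into an average growth rate for $m$. Once you use the correct length $\Delta$, the chaining you called ``naive'' is in fact the correct one and yields exactly the stated bound.
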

\begin{proof}
The proof is based on specializing the proof of Theorem~\ref{th:general-conv} to the case at hand: we refer to that proof using the same notation.
Equation~\eqref{eq:has-to-diverge-final} becomes, being the graph topology time-invariant,
$$
x_i(t)
\ge x_i(0)+\Delta \int_{J\intersection (0,t)} \aLow \left( \frac{\aLow}{N \aUp} \right)^N ds
\ge m(0)-\frac12 \Delta + \Delta \aLow \left( \frac{\aLow}{N \aUp} \right)^N t. 
$$
Then, considering the sequence of $T_k$'s, we argue that $T_k-T_{k-1}\le \frac{N}{\aLow} \left(\frac{N \aUp}{\aLow}\right)^N$ for every $k\ge1$, as every quantization interval contains at most $N$ agents. On the other hand, $k$ needs not to be larger than $(M(0)-m(0))/\Delta$. These remarks prove the statement.
\end{proof}


Next, we want to discuss the tightness of estimate~\eqref{eq:Tcon}, in terms of the dependence on $N$ and on $\Delta$.
The parameter $\Delta$ represents the quantizer precision and, in view of Corollary~\ref{corol:unif-conv}, also the accuracy which is achievable in approximating the consensus. The bound~\eqref{eq:Tcon} allows for a convergence time which is polynomial in $\Delta$: the following example shows that there exist families of solutions which meet the bound, exhibiting a convergence time proportional to $\Delta^{-1}.$
Indeed, for every $N$ we can find a weighted graph $\G$ and an initial condition $\bar x$ such that for a certain solution such that $x(0)=\bar x$,
$$\Tcon\ge\frac18 \frac{N}{\aLow\,\Delta} \max_{i,j\in I}|q(x_i(0))-q(x_j(0))| .$$
%
%
\begin{example}[Slow convergence: $\Tcon\sim \Delta^{-1}$]\label{ex:Tcon-Delta}
We let $N\ge 3$, $I=\until{N}$ and we assume the topology to be a line graph, namely 
$$a_{ij}=\begin{cases}
1 &\text{if}\:\: i=1 \:\text{and}\: j=2\\
1 &\text{if}\:\: 2\le i\le N-1 \:\text{and}\: j=i-1, i+1\\
1 &\text{if}\:\: i=N \:\text{and}\: j=N-1\\
0 &\text{otherwise}.
\end{cases}$$
Note that the resulting dynamics~\eqref{eq:quantized-dynamics} preserves the average of the states. Regarding the initial condition, we assume $x_i(0)= \Delta (i-1)$ for all $i\in I$.
In the analysis of the resulting system, we think of the agents as arranged on a line and we only describe the evolution of the leftmost agents (1,2,\ldots,$\lfloor N/2\rfloor$), the evolution of the others being symmetrical. 
For early positive times, all agents are still except agent $1$ which moves to the right with constant speed $\Delta$. Then, at time $T'=\frac12$ we have that $x_1(T')=\Delta/2$, that is agent $1$ reaches the border of the first quantization interval. Since $\K q(x_1(T'))\ni \Delta$, there is one Krasovskii solution such that for $t\in(T',2T')$, $x_1(t)$ is constant while agent 2 moves to the right until it reaches $x_2(2T')=3\Delta/2$, so that $\K q(x_2(2T'))\ni 2\Delta$ and $\K q(x_1(2T'))\ni \Delta$. Then, for $t\in (2 T',4 T')$ the only agent on the move is again agent $1$, until $x_1(4 T')=3\Delta/2$. At time $t=4\,T'$, the two agents have the same state $x_2(4T')=x_1(4T').$ 
After this time, agents 3, 2 and 1 move to the right during three successive time  intervals, so that at $t=9 T'$ they are all collocated as $x_1(t)=x_2(t)=x_3(t)=5\Delta/2.$ By repeating this reasoning, we observe that the constructed solution $x(\cdot)$ reaches the limit configuration of Corollary~\ref{corol:average} at time 
$$\Tcon=\frac12\sum_{k=0}^{\lfloor \frac{N}2\rfloor} (1+2 k)
=\frac12\left\lfloor \frac{N}2\right\rfloor \left(\left\lfloor \frac{N}2\right\rfloor+2\right)
\ge\frac18 N(N-1).$$
Since $q(x_N(0))-q(x_1(0))=(N-1)\Delta,$ then 
$\ds\Tcon\ge\frac18 N \frac{q(x_N(0))-q(x_1(0))}{\Delta}.$
\qed
\end{example}

On the other hand, $N$ is the number of agents, and the bound~\eqref{eq:Tcon} allows for a convergence time which is exponential in $N$. The following example provides a family of solutions such that 
\be\label{eq:Tcon-expN}\Tcon\ge C\, 2^N,\ee
for a positive constant $C$. 
We observe that in order to have an exponential-in-$N$ convergence time, the solution must stay on a discontinuity of the right-hand side for a finite duration of time.
\begin{example}[Slow convergence: $\Tcon\sim e^{N}$]\label{ex:Tcon-N}
We let $I=\until{N}$ and we assume that, given $0<a\le b$ 
$$
\begin{cases}
\dot x_1= a \big(q(x_{2})-q(x_{1})\big) \\
\dot x_i= a \big(q(x_{i+1})-q(x_{i})\big) + b \big(q(x_1)-q(x_i)\big) &\text{if}\:\: 2\le i\le N-1\\
\dot x_N=0.
\end{cases}
$$
We also assume that the quantizer is uniform with $\Delta=1$ and that the initial condition is 
$$\begin{cases}
x_1(0)=0\\
x_i(0)= \frac12 &\text{if}\:\: 2\le i\le N-1\\
x_N=1.
\end{cases}$$
Note that $x_i(0)$ is on a discontinuity point of $q$ for $2\le i\le N-1$: then the Krasovskii convexification is nontrivial and we have $\dot x= -L z$, denoting the convexified values as  $z_i=(1-\alpha_i)\times 0 + \alpha_i \times 1= \alpha_i$.
One can immediately verify that there exists a Krasovskii solution $x(\cdot)$ having the following properties:
\renewcommand{\theenumi}{\alph{enumi}}
\begin{enumerate}
\item for every $t\ge0$, it holds that $x_N(t)=1$ and $x_i(t)=\frac12$ if $2\le i\le N-1$;
\item $\alpha_i=\left(\frac{a}{a+b}\right)^{N-i}$ for all $2\le i\le N-1$ and for $t\le \Tcon$;
\item $\dot x_1(t)= a \left(\frac{a}{a+b}\right)^{N-1}$ almost always for $t\le \Tcon$;
\item  at time $\Tcon=\frac1{2a} \left(\frac{a+b}{a}\right)^{N-1}\ge \frac1{2a} 2^{N-1} $ the agents reach quantized consensus in the interval $[1/2,1]$.
\end{enumerate}
Then~\eqref{eq:Tcon-expN} follows choosing $C=\frac1{4a}.$ \qed

\end{example}

\bigskip
The qualitative behavior of the convergence time of Krasovskii solutions, outlined above, should be contrasted with that of nonquantized consensus dynamics.
Let $\subscr{T}{con}^\eps$ be the time for convergence within a precision $\eps$ in a suitable norm. Then, consensus dynamics without quantization typically yield a logarithmic dependence on $\eps$, 
 $$ \subscr{T}{con}^\eps \le C\,\log{\eps^{-1}},$$
where $C$ is a constant which depends on the initial condition and on the topology of the interaction graph, and entails a dependence on $N$ which is at most polynomial.

We conclude that our theoretical results predict a qualitative degradation of convergence speed due to quantization. 
However, Proposition~\ref{prop:Tcon-est} is intrinsically a worst-case result, and not every solution needs to achieve the performance bound.
Indeed, it is argued in~\cite[Remark~5]{FC-CDP-PF:10a} that, far from the equilibria, the quantized dynamics converges exponentially fast and has the same rate of convergence as the nonquantized linear consensus dynamics. This is confirmed by simulations reported in the same paper, which show logarithmic convergence times in both cases.
These remarks entail no contradiction: far away from the equilibria the quantized dynamics is well approximated by the nominal linear dynamics, and the effect of quantization can be studied as a bounded disturbance (cf.~\cite{PF-RC-FF-SZ:08,DB-LG-RP:09,AG-AG:11}). On the other hand, in a neighborhood of the equilibria the approximation is no longer good and the consequences of quantization may fully come out, as we have shown above.

\section{Summary and future work}
This paper has demonstrated that a mathematical framework combining graph theory and Krasovskii differential inclusions can be useful to solve problems of distributed control with quantized communication. Complete Krasovskii solutions of quantized consensus dynamics  exist for any initial condition, and it is possible to study their converge to equilibria of ``practical consensus''. Under a mild connectivity assumption, which translates to the unbounded interactions graph the usual connectivity condition for consensus on static networks, solutions are shown to reach a neighborhood of consensus after a finite time. The size of such neighborhood only depends on the quantizer, and can thus be made arbitrarily small by design. On the other hand, the convergence time can be exponentially increasing in the number of nodes for some solutions which slide on a surface of discontinuity of the dynamics.

A few natural generalizations of the present work would be of interest: we briefly mention three of them.
\begin{enumerate}
\item
In this paper, the states of the agents are communicated through a non-smooth map which is a quantizer, that is, whose range is a discrete space. However, our proof technique based on monotonicity properties seems to be promising for studying convergence of systems featuring more general non-smooth interaction maps. 
\item 
Theorem~\ref{th:general-conv} states sufficient conditions for consensus: is it then natural to ask whether these assumptions are necessary. 
While it is clear that the connectedness of $\G_\infty$ is necessary for consensus, we believe that the argument of Theorem~\ref{th:general-conv} can be extended in such a way to relax the non-degeneracy assumption $\aLow>0$. 
A sufficient connectivity condition would then be: there exist $T>0$, $\delta>0$ and a graph $G=(I,E)$ which has a globally reachable node and is such that if $(i,j)\in E$, then  $\int_{t_0}^{t_0+T}a_{ij}(t)dt > \delta$ for every $t_0>0$. 
We leave the proof of this extension to future research.
On the other hand, when $\G_\infty$ is not connected but is {\em cut-balanced} in the sense of~\cite[Assumption~1]{JMH-JNT:11a}, we expect results of partial consensus and clusterization~\cite{VDB-JMH-JNT:09a,FC-PF:10}.

\item In this work, connectivity is a function of time determined by an exogenous signal. However, there are applications in which connectivity between agents is state-dependent. Which would be the convergence properties of quantized continuous-time dynamics on a state-dependent network described by interaction functions of type $a_{ij}(t,x)$? This investigation may have broad applications, including rendezvous and coordination problems in robotic networks where the ability to communicate depends on the robot locations~\cite{FB-JC-SM:09,CDP-MC-FC:11}, and modeling opinion dynamics with limited verbalization capabilities~\cite{DU:03} in social networks.
\end{enumerate}


\bibliographystyle{plain}

\end{document}